\documentclass[reqno,psamsfonts,12pt]{gen-j-l}
\usepackage{amssymb,amsmath,amsbsy,amsthm,esint,setspace}
\usepackage{hyperref}
\usepackage{amsrefs}
\usepackage{enumitem}



\raggedbottom

\newtheorem{theorem}{Theorem}[section]
\newtheorem{lemma}[theorem]{Lemma}
\newtheorem{proposition}[theorem]{Proposition}
\newtheorem{corollary}[theorem]{Corollary}

\newtheorem{example}{Example}[section]

\newtheorem*{remark}{Remark}

\begin{document}

\title[Virial estimates for hard spheres]{Virial estimates
for hard spheres}

\author{Ryan Denlinger}

\begin{abstract}
We review a virial-type estimate which bounds the
strength of interaction for a gas of $N$ hard spheres (billiard balls)
dispersing into Euclidean space $\mathbb{R}^d$.
This type of estimate has been known for
decades in the context of (semi-)dispersing
billiards, and is essentially trivial in that context.
Our goal, however, is to write virial estimates in
a way which may lend insight into the problem of rigorously
deriving Boltzmann's
equation (cf. Lanford's theorem). Using virial estimates, we provide a short
proof of lower bounds (sharp up to
powers of logarithms) on the convergence rate of the first marginal in
Lanford's theorem. Such lower bounds will often, but not always,
 follow trivially from
energy conservation; the proof we present holds assuming only that
the limiting dynamics is regular enough and does not reduce to
 free transport.
\end{abstract}

\maketitle

\section{Introduction}
\label{intro}

The problem of interest to us is that of deriving various
nonlinear partial differential equations (PDEs) starting from the
Newtonian gas of $N$ hard spheres. Depending on the chosen scalings, the
relevant PDE could be the Navier-Stokes-Fourier equations, Boltzmann's
equation, etc. (though fully nonlinear Navier-Stokes-Fourier is
far out of reach by current methods). 
 The existence and uniqueness of solutions to nonlinear PDEs
is generally an open problem, except in the presence of very special
conservation principles or perturbative assumptions. Even when
solutions are known, the analysis tends to be quite complicated,
depending on the strength of available \emph{a priori} estimates for
a hypothetical solution. For this reason, we are naturally led to
the problem of deriving analogous \emph{a priori} bounds on the
particle model.

\subsection{Hydrodynamic Limits of Interacting Particle Systems.}

There is not one unique way to approach the derivation of hydrodynamic
equations starting from mechanical laws.
 One possible strategy to attack this problem would be
to set up a hydrodynamic scaling at the particle level and let
$N\rightarrow \infty$ (with the hope that local Gibbs states will 
possess some ergodicity). This is sometimes a useful approach in
the presence of \emph{stochasticity} (e.g. see \cite{OVY1993})
but has not been particularly 
fruitful in the deterministic case due to the limited understanding
of dynamical systems in many dimensions. (Note however
that some one-dimensional
 models are tractable, e.g. identical hard
rods. \cite{BDS1983}) A second possible strategy for hydrodynamic limits
is to look for
an intermediate (kinetic) description, retaining some of the microscopic
information but not all of it. Kinetic descriptions operate on
much smaller timescales than hydrodynamic descriptions because
time averaging always washes some microscopic information away.
Therefore, in order to pass from a kinetic description to a hydrodynamic
description \emph{at the particle level},
we need \emph{quantitative bounds} on \emph{long time
intervals} (compactness is not enough!).

Despite striking advances in the passage from Boltzmann's equation
to hydrodynamic equations in various low-density regimes
(see \cite{LSR2009} for an overview), the derivation of Boltzmann's
equation from Newton's laws is still in its infancy.
A classical theorem due to O. E. Lanford establishes the validity
of Boltzmann's equation for a hard sphere gas, but only up to
a fraction of the mean free time for a particle of gas.
\cite{L1975,GSRT2014,PSS2014}
Obviously Lanford's theorem is completely unsatisfactory because
we need \emph{many} collisions even to progress past $t=0$ in 
a hydrodynamic description. R. Illner and M. Pulvirenti 
were able to obtain convergence globally in time, but only when
the gas is so diffuse that particles mostly do not collide at all.
\cite{IP1986,IP1989}
H. van Beijeren, O. E. Lanford, J. L. Lebowitz and H. Spohn were
able to derive the (non-conservative) \emph{linear} Boltzmann
equation for a tagged particle in an equilibrium background, as well
as the linearized Boltzmann equation which is formally associated to the response of the
background itself.
\cite{vBLLS1980,LS1982} 

Much more recently, T. Bodineau, I. Gallagher, and L. Saint-Raymond 
were able to quantify the convergence from \cite{vBLLS1980} on
time scales $T_N$ diverging like
a power of $\log \log N$, thereby deriving Brownian motion in a 
suitable hydrodynamic scaling. \cite{BGSR2015} (Note that the $\log \log N$
timescale is still troublesome from a physical point of view but
it is hard to avoid mathematically using the series-based
methods of \cite{L1975,vBLLS1980,BGSR2015}.)
 In a follow-up work, the same authors considered
a symmetrized perturbation
of size $1/N$ 
(this simulates the response of an equilibrium
background to the influence of a tagged particle). \cite{BGSR2015III}
This leads to a rigorous derivation of the \emph{linearized} Boltzmann
equation on long timescales, and subsequently a derivation of
 linear hydrodynamic models (in two dimensions only).

\subsection{Monotonicity, convexity, Morawetz, Bony.}

One of the classical problems for billiard systems (such as the
hard sphere gas) is to estimate the number of collisions in
a finite time interval. It is known that this number is finite
for $N$ hard spheres in $\mathbb{R}^d$ 
(see \cite{Va1979,I1989} for two different proofs) but 
\emph{a priori} it might depend on the initial condition.
Actually it turns out that the number of collisions is
bounded \emph{uniformly} with respect to initial conditions,
but might grow like $N^N$ or worse. \cite{BFK1998}
Note that \emph{even if we ignore high-order correlations}
and simply consider 
 clusters of $\mathcal{O} (\log N)$
particles, the function $(\log N)^{\log N}$ \emph{still} grows
faster than any power of $N$.
On the other hand, with respect to the Lanford theorem,
one does not \emph{really} care about the total number of collisions.
We care about \emph{estimates} in \emph{good function spaces};
we do not need to count all collisions the same way.

Workers in billiards theory have known for decades that
\emph{some} collisions can be estimated very efficiently
by constructing monotonic or convex functions of phase space
coordinates. (Monotonicity or convexity is here measured
along a fixed trajectory.) This idea
 was stated explicitly in \cite{BFK1998} and
was used implicitly in both \cite{Va1979} and \cite{I1989}.
It turns out that these (monotonic or convex)
 functions are of the same type as the
functions appearing in proofs of virial and Morawetz type
estimates for dispersive PDEs and Vlasov-type equations.
(In fact T. Tao points out the connection explicitly in
his book \cite{TTao}; his \S 1.5 Example 1.34 may be viewed as
a caricature of our Corollary \ref{cor:s3-IP-3}, whereas our
Corollary \ref{cor:s3-IP-3} is all but written
already by R. Illner in \cite{I1989}.)

There is no known analogue of virial or Morawetz identities
for the Boltzmann equation in general (without assuming
extra estimates above the energy level). The closest known results
are set in one space dimension; technically, the physical setting
is $\mathbb{R}^3$ with spatial variation along just one axis.
In that case, for certain collision kernels, it is possible to
write down an integral which effectively tracks the accumulation
of collisions as the solution $f(t)$ interacts with itself.
One can prove monotonicity in time, as well as uniform boundedness
in large time, using conservation laws. 
\cite{BCP2006,Ce2005,Ce1995,Ce1992}
This technique is
known as \emph{Bony's functional} or \emph{Glimm's functional},
by analogy with similar techniques for hyperbolic conservation
laws in one space dimension. The point of this technique is that,
in one dimension, two disturbances will perhaps pass through each
other a few times and interact, but each time some part of
the \emph{potential for interaction}
is expended and cannot be used again. This potential for interaction
can only be
measured directly
due to the one-dimensional geometry. In higher dimensions,  one would
have to consider potentials along many possible trajectories of
the system and this is just too difficult to quantify (compare the
difficulty of tracking shocks in higher dimensions).

\subsection{Main results and organization of this paper.}

The main focus of this work is Proposition \ref{prop:s4-interaction}, which
is a virial-type spacetime estimate for hard spheres. As noted
above, virial-type estimates
are essentially classical in the billiards literature, and they play a
prominent role in the derivation of Boltzmann's equation. 
(See \cite{IP1986,IP1989}, particularly the first lemma of \cite{IP1989},
which is Lemma \ref{lemma:s3-IP-1} in the present manuscript.) 
The main difference with Proposition \ref{prop:s4-interaction} is that the
classical virial bound is re-formulated to
 control a quantity closely associated with the hard sphere BBGKY
hierarchy, for a wide class of initial data.
Unfortunately, while these estimates are quite general, they do not
lead directly to coercive estimates at the limit for any nontrivial
scaling of which we are aware. We will show, however, that virial
estimates can be used to place \emph{lower} bounds on the convergence
rate in Lanford's theorem. Such lower bounds may, but do not always,
follow trivially from energy conservation; our result holds under
essentially minimal assumptions on the initial data. The types
of data which are newly covered by our
 result have a product structure
at the initial time, $f_0 (x,v) = \rho (x) m ( v )$, or are convex
combinations $f_0 (x,v) = \int d\alpha \rho_\alpha (x)
m_\alpha (v)$ where each $m_\alpha$ has the same conserved moments.
 (See Example \ref{ex:ex1}.)

Section \ref{sec:2} introduces the basic notation of this work, which
mostly follows the presentation of \cite{GSRT2014}.
Section \ref{sec:3} gives an elementary derivation of an identity 
due to
Illner \cite{I1989}. In Section \ref{sec:4}, we apply
this
identity in a heuristic manner to derive the virial-type spacetime estimate;
a rigorous proof may be found in \cite{De2016T}.
Section \ref{sec:fact} gives a very concise overview of Lanford's
theorem, \cite{L1975,GSRT2014}, and some recent
developments. Finally in Section \ref{sec:L} we use virial identities
to prove a bound from below on the convergence rate in Lanford's
theorem (including cases where such lower bounds would not follow
directly from energy conservation).

\section{Notation}
\label{sec:2}

Consider $N$ non-overlapping
hard spheres centered at positions $x_i \in \mathbb{R}^d$ with
velocities $v_i \in \mathbb{R}^d$ for $i=1,2,\dots,N$. The spheres are
considered to have identical mass and radius, and are in all other ways
physically indistinguishable. For convenience, we will assume without
loss that all particles have unit diameter. The collection of all
positions is a tuple $X_N$,
\begin{equation*}
X_N = \left( x_1,x_2,\dots,x_N\right)\in\mathbb{R}^{dN}
\end{equation*}
\begin{equation*}
V_N = \left( v_1,v_2,\dots,v_N\right)\in\mathbb{R}^{dN}
\end{equation*}
The classical phase-space coordinates of $i$th particle are
given by $z_i = (x_i,v_i)$, and the phase-space coordinates of the
whole gas are denoted
\begin{equation*}
Z_N = \left( z_1,z_2,\dots,z_N\right)\in
\mathbb{R}^{2dN}
\end{equation*}
We may also write $Z_N = \left( X_N,V_N\right)$. The following function
will play a central role in our analysis: for $t\in\mathbb{R}$ and
$Z_N \in \mathbb{R}^{2dN}$, we define
\begin{equation}
\label{eq:s2-r-N-def}
r_N \left( t,Z_N \right) =
\sum_{i=1}^N \left( x_i \cdot v_i - |v_i|^2 t\right)
\end{equation}
Following \cite{GSRT2014}, we may introduce the $N$-particle phase space $\mathcal{D}_N$,
which is defined by
\begin{equation}
\label{eq:s2-D-N-def}
\mathcal{D}_N = \left\{ \left. Z_N \in \mathbb{R}^{2dN} \right|
\; \forall 1\leq i < j \leq N,\;
|x_i - x_j| > 1  \right\}
\end{equation}
The choice of $\mathcal{D}_N$ is motivated by requirement that the
spheres be mutually disjoint at all times.
The closure of $\mathcal{D}_N$ in $\mathbb{R}^{2dN}$ in the standard
topology is denoted $\overline{\mathcal{D}}_N$, and we will also
write $\partial \mathcal{D}_N = \overline{\mathcal{D}}_N \backslash
\mathcal{D}_N$. We will use the notation $\textnormal{ a.e. }
 Z_N \in \mathcal{D}_N$
to refer to a typical point for the Lebesgue measure on
$\mathcal{D}_N$. The notation $\textnormal{ a.e. }
 Z_N \in \partial \mathcal{D}_N$
will refer to a typical point for the induced surface measure
 arising from the natural embedding
$\partial \mathcal{D}_N \subset \mathbb{R}^{2dN}$.

Formally speaking, we wish to solve Newton's laws with a hard core
interaction. This means if $Z_N(t_0) = (X_N(t_0),V_N(t_0)) \in \mathcal{D}_N$
then
\begin{equation*}
\begin{aligned}
\left. \frac{d}{dt} X_N (t) \right|_{t=t_0} &  = V_N (t_0) \\
\left. \frac{d}{dt} V_N (t) \right|_{t=t_0} & = 0
\end{aligned}
\end{equation*}
Hence the particles move freely between collisions. At each
collision (that is, $Z_N (t_0) \in \partial \mathcal{D}_N$), the
particles are required to interact elastically, thereby conserving
 momentum, energy, and angular momentum. The set of possible
interactions for two-body elastic collisions is easy to parametrize
explicitly. Suppose that there exists $i<j$ such that
 $x_j (t_0) = x_i (t_0) + \omega$ for some
$\omega \in \mathbb{S}^{d-1}$; and, further suppose that
$\left|x_{j^\prime} (t_0) -  x_{i^\prime}(t_0)\right| > 1$ for any
$i^\prime < j^\prime$ such that $(i^\prime,j^\prime) \neq (i,j)$.
Let us denote
\begin{equation*}
\begin{aligned}
\lim_{t\rightarrow t_0^-} V_N (t) & = \left( v_1,\dots,v_i,\dots,
v_j,\dots,v_N\right) \\
\lim_{t\rightarrow t_0^+} V_N (t) & = \left( v_1,\dots,v_i^*,\dots,
v_j^*,\dots,v_N \right)
\end{aligned}
\end{equation*}
Then we have
\begin{equation*}
\begin{aligned}
v_i^* & = v_i + \omega \omega \cdot \left( v_j - v_i \right) \\
v_j^* & = v_j - \omega \omega \cdot \left( v_j - v_i \right)
\end{aligned}
\end{equation*}

Similarly for $\textnormal{ a.e. } Z_N \in \partial \mathcal{D}_N$
we will use the notation $Z_N^*$ to refer to the image of the point
$Z_N$ through the collision transformation. The map 
$Z_N \mapsto Z_N^*$ is a measurable involution.

In the above ``definition,'' we have neglected to specify uniquely
what happens when more than two particles collide at the same time.
Multiple particle interactions occur with zero probability, though
this statement requires justification which we will not discuss.
(See \cite{Al1975} or \cite{GSRT2014}.)
The hard sphere flow at time $t$ defines a measurable map
\begin{equation*}
\psi_N^t : \mathcal{D}_N \rightarrow \mathcal{D}_N
\end{equation*}
For each $t\in\mathbb{R}$, the map $\psi_N^t$ preserves the  Lebesgue
measure on $\mathcal{D}_N \subset \mathbb{R}^{2dN}$.
Complete proofs of the existence of the hard sphere flow
$\psi_N^t$ may be found in the literature. \cite{Al1975,GSRT2014} 

Following Boltzmann's great insight, we realize that it is not very
interesting to discuss any \emph{particular} trajectory
$\left\{ \psi_N^t Z_N \right\}_{t\geq 0}$, because it is physically
infeasible (or impossible) to measure the positions and velocities of all
the particles at a given instant. Therefore, the initial
value problem for Newton's laws is \emph{not} the correct problem for
us to solve. The correct approach is to place a probability density
$f_N (0,Z_N)$ on the set of possible initial states $Z_N \in \mathcal{D}_N$.
The function $f_N (0,Z_N)$ represents our uncertainty
about the actual state of the system. Since we have no physical
means to distinguish between two particles in our model, the function
$f_N (0,Z_N)$ must be \emph{symmetric} with respect to interchange of
particle indices.

We will denote by $\mathcal{S}_N$ the symmetric group on $N$ letters.
Any permutation $\sigma \in \mathcal{S}_N$ acts on the phase-space
coordinates $Z_N = (z_1,z_2,\dots,z_N) \in \mathcal{D}_N$ as follows:
\begin{equation*}
\sigma Z_N = \left( z_{\sigma(1)},z_{\sigma(2)},\dots,
z_{\sigma(N)}\right) \in \mathcal{D}_N
\end{equation*}
Similarly, if $f_N (Z_N)$ is any function on $\mathcal{D}_N$, then
$\sigma$ acts on $f_N$ by composition: $\sigma f_N =
f_N \circ \sigma$. Let $\mathcal{P}\left(\mathcal{D}_N\right)$ denote the
set of probability measures on $\mathcal{D}_N$, and furthermore let
$\mathcal{P}_{\textnormal{a.c.}}\left(\mathcal{D}_N\right)$ denote the
set of probability measures which are absolutely continuous with respect
to the Lebesgue measure on $\mathcal{D}_N$. Any element of 
$\mathcal{P}_{\textnormal{a.c.}}\left(\mathcal{D}_N\right)$ may be
represented uniquely ($\textnormal{ a.e. } Z_N \in \mathcal{D}_N$)
by a non-negative function $f_N (Z_N)$ such that
$\int_{\mathcal{D}_N} f_N (Z_N) dZ_N = 1$. Finally let
$\mathcal{P}_{\textnormal{a.c.}}^{\textnormal{sym}} \left(
\mathcal{D}_N\right)$ be
the set of absolutely continuous measures on $\mathcal{D}_N$ such that
the associated function $f_N$ is invariant under the action of 
$\mathcal{S}_N$. Henceforth, when we write $f_N$, we will always mean
an element of $\mathcal{P}_{\textnormal{a.c.}}^{\textnormal{sym}}
\left(\mathcal{D}_N\right)$.

Let $f_N (0)$ be any element of 
$\mathcal{P}_{\textnormal{a.c.}}^{\textnormal{sym}}
\left( \mathcal{D}_N\right)$, which we regard as the initial state of
the $N$ particle gas. For any $t\in\mathbb{R}$ we will let $f_N (t)$
be the pushforward of $f_N (0)$ under the hard sphere flow 
$\psi_N^t$; then, $f_N (t)$ is likewise an element of
$\mathcal{P}_{\textnormal{a.c.}}^{\textnormal{sym}}
\left(\mathcal{D}_N\right)$. Since $\psi_N^t$ preserves the
Lebesgue measure on $\mathcal{D}_N$, we may write the following
expression for $f_N (t)$:
\begin{equation}
\label{eq:f-N-t}
f_N (t,Z_N) = f_N \left(0,\psi_N^{-t} Z_N\right)
\end{equation}
The functions $f_N (0)$ and $f_N (t)$ may be extended by zero so as
to be defined on $\mathbb{R}^{2dN}$.

For any
$1\leq s \leq N$, we define the marginal $f_N^{(s)} (t)$
by partial integration:
\begin{equation}
\label{eq:s2-f-N-s}
f_N^{(s)} (t,Z_s) =
\int_{\mathbb{R}^{2d(N-s)}} f_N (t,Z_N) dz_{s+1} \dots dz_N
\end{equation}
The evolution of the marginals $f_N^{(s)}(t)$ may be described
explicitly via the so-called BBGKY hierarchy 
(Bogoliubov-Born-Green-Kirkwood-Yvon) \cite{GSRT2014}, though we
will \emph{not} be making any use of the BBGKY hierarchy except in
Section \ref{sec:L}.
The marginals $f_N^{(s)} (t)$ are non-negative symmetric functions
on $\mathbb{R}^{2ds}$ with unit mass.

The main result we will show, Proposition \ref{prop:s4-interaction},
  will control the trace of the marginals
$f_N^{(s)} (t,Z_s)$
along a certain hypersurface in $\mathbb{R}\times \mathbb{R}^{2ds}$,
with polynomial dependence on $N$ for large values of $N$. This is slightly problematic
because the trace of an $L^1$ function is simply not defined; moreover,
even if the data $f_N (0)$ is \emph{smooth}, the function
$f_N (t)$ typically develops singularities. Nevertheless, due to
technical arguments which we will not discuss, it is possible to show
that if $f_N (0)$ is smooth and compactly supported in
$\mathcal{D}_N$ then the required traces of $f_N^{(s)}(t)$
do, in fact, exist (at least for almost every $t\in \mathbb{R}$).
 See \cite{CGP1997,GSRT2014,Si2014,PS2015II} for more
information on regularity issues for hard spheres. Our estimates do not
depend on the choice of regularization, except insofar as the regularized
marginals must be a sequence of symmetric non-negative functions
which are indeed marginals in the sense of (\ref{eq:s2-f-N-s}).
Therefore, similar to the proof of the classical trace theorem in partial
differential equations ($W^{1,p} (U) \subseteq 
L^p (\partial U)$ for sufficiently smooth
 bounded regions $U \subset \mathbb{R}^k$),
the traces are actually meaningful \emph{for solutions of Liouville's equation}
even if the initial data is only $L^1$. We will not discuss further the issues of
regularity.

\section{A monotonicity formula}
\label{sec:3}

Let us fix an initial point $Z_N \in\mathcal{D}_N$ in the microscopic
phase-space, and consider the trajectory $\left\{
\psi_N^t Z_N \right\}_{t\in\mathbb{R}}$. Our analysis begins with a
simple observation: with $r_N (t,Z_N)$ as in (\ref{eq:s2-r-N-def}),
if we define
\begin{equation}
r_{Z_N} (t) = r_N \left( t,\psi_N^t Z_N\right)
\end{equation}
then for any $t_0$ such that $\psi_N^{t_0} Z_N \in \mathcal{D}_N$ we have
\begin{equation}
\left. \frac{d}{dt} r_{Z_N} (t) \right|_{t=t_0} = 0
\end{equation}
Indeed, we see that if $\dot{x}_i = v_i$ and $\dot{v}_i = 0$ then
\begin{equation*}
\frac{d}{dt} \left( x_i \cdot v_i - |v_i|^2 t\right) = 0
\end{equation*}
Therefore, the difference $r_{Z_N} (t) - r_{Z_N} (0)$ is simply equal
to a sum along \emph{collisions} of incremental jumps in 
$r_{Z_N}(t)$. It will turn out that all of these jumps have the
\emph{same sign}, and we can compute the jumps explicitly in terms
of collision parameters.

Let us compute the jump in $r_{Z_N}(t)$ across a collision taking place
at time $t_0 \in \mathbb{R}$. We may assume
that the interacting particles are simply those labelled $i=1,2$, since
collisions are binary and particles are indistinguishable.
The position coordinates are continuous in time, so
we write them $x_1,x_2$, with $x_2 = x_1 + \omega$ for some
$\omega \in \mathbb{S}^{d-1}$. The pre-collisional velocities will
be denoted $v_1\equiv v_1 (t_0^-),v_2 \equiv v_2(t_0^-)$ and the
post-collisional velocities will be denoted 
$v_1^*\equiv v_1 (t_0^+),v_2^* \equiv v_2 (t_0^+)$. We have
\begin{equation*}
\begin{aligned}
& r_{Z_N} (t_0^+) - r_{Z_N} (t_0^-) 
 = \left\{ \left( x_1 \cdot v_1^* - |v_1^*|^2 t_0\right) +
\left( x_2 \cdot v_2^* - |v_2^*|^2 t_0\right)\right\} + \\
& \qquad \qquad \qquad \qquad\qquad \quad
 + \left\{ - \left( x_1 \cdot v_1 - |v_1|^2 t_0\right) - 
\left( x_2 \cdot v_2 - |v_2|^2 t_0 \right)\right\}
\end{aligned}
\end{equation*}
Due to energy conservation,
\begin{equation*}
|v_1^*|^2 + |v_2^*|^2 = |v_1|^2 + |v_2|^2
\end{equation*}
so we may eliminate the explicit dependence on $t_0$.
\begin{equation*}
r_{Z_N} (t_0^+) - r_{Z_N} (t_0^-) =
x_1 \cdot v_1^* + x_2 \cdot v_2^* - x_1 \cdot v_1 - x_2 \cdot v_2
\end{equation*}
Since $x_2 = x_1 + \omega$, this gives us
\begin{equation*}
r_{Z_N} (t_0^+) - r_{Z_N} (t_0^-) =
x_1 \cdot \left( v_1^* + v_2^* - v_1 - v_2\right) +
\omega \cdot \left( v_2^* - v_2\right)
\end{equation*}
Due to momentum conservation,
\begin{equation*}
v_1^* + v_2^* = v_1 + v_2
\end{equation*}
so we may eliminate the explicit dependence on the position coordinates.
Hence
\begin{equation*}
r_{Z_N} (t_0^+) - r_{Z_N} (t_0^-) = \omega \cdot
\left( v_2^* - v_2\right)
\end{equation*}
This is the same as
\begin{equation*}
r_{Z_N} (t_0^+) - r_{Z_N} (t_0^-) = - \omega \cdot
\left( v_2 - v_1\right)
\end{equation*}
by the collisional change of variables from Section \ref{sec:2}.
But $v_1,v_2$ are the velocities of the two particles \emph{coming
into} a collision, so we must have
\begin{equation*}
\left( x_2 - x_1 \right) \cdot \left( v_2 - v_1\right) \leq 0
\end{equation*}
and therefore $\omega \cdot \left( v_2-v_1\right) \leq 0$. Hence,
\begin{equation*}
r_{Z_N} (t_0^+) - r_{Z_N} (t_0^-) = \left| \omega \cdot
\left( v_2 - v_1\right)\right|\geq 0
\end{equation*}

Adding up all collisions along the trajectory we obtain the following
identity, which was observed by Illner: \cite{I1989}

\begin{proposition}
\label{prop:s3-Illner}
(Illner) 
For $\textnormal{ a.e. } Z_N \in \mathcal{D}_N$ and 
$\textnormal{ a.e. }t\geq 0$ there holds
\begin{equation}
\label{eq:s3-Illner}
r_{Z_N} (t) - r_{Z_N} (0) = \sum_k \left|\omega_k \cdot
\left( v_{j_k} (t_k^-) - v_{i_k} (t_k^-)\right)\right|
\end{equation}
where the sum $\sum_k$ is over all collisions along the trajectory
$\left\{ \psi_N^\tau Z_N \right\}_\tau$ for $0 \leq \tau \leq t$.
\end{proposition}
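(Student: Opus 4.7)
The plan is to combine two facts already established in the preceding discussion: first, that between collisions $r_{Z_N}(\tau)$ is constant in $\tau$; second, that at each binary collision $r_{Z_N}$ jumps by exactly $\lvert\omega\cdot(v_j-v_i)\rvert$, where $v_i,v_j$ are the pre-collisional velocities. The only external input is the regularity of the hard-sphere flow, namely that for a.e.\ $Z_N \in \mathcal{D}_N$ the trajectory $\{\psi_N^\tau Z_N\}_{\tau\in[0,t]}$ experiences only finitely many collisions in $[0,t]$, each of which is a simple binary collision with strictly inward-pointing impact vector.

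Fix such a generic $Z_N$, and let $0 < t_1 < t_2 < \dots < t_K < t$ enumerate the collision times in $(0,t)$. On each open subinterval $(t_{k-1},t_k)$, with the conventions $t_0 = 0$ and $t_{K+1} = t$, the flow $\psi_N^\tau Z_N$ lies in $\mathcal{D}_N$ and evolves by $\dot{x}_i = v_i$, $\dot{v}_i = 0$, so the computation
\begin{equation*}
\frac{d}{d\tau}\left(x_i \cdot v_i - \lvert v_i\rvert^2 \tau\right) = v_i \cdot v_i - \lvert v_i \rvert^2 = 0
\end{equation*}
from the excerpt implies $\frac{d}{d\tau} r_{Z_N}(\tau) = 0$ on each open subinterval. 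Thus $r_{Z_N}$ is piecewise constant with well-defined one-sided limits at every $t_k$, and in particular $r_{Z_N}(0) = r_{Z_N}(t_1^-)$, $r_{Z_N}(t_k^+) = r_{Z_N}(t_{k+1}^-)$ for $1 \leq k < K$, and $r_{Z_N}(t_K^+) = r_{Z_N}(t)$. Next I would invoke the jump computation already carried out in the excerpt: at each $t_k$ the colliding pair $(i_k,j_k)$ with impact vector $\omega_k \in \mathbb{S}^{d-1}$ satisfies
\begin{equation*}
r_{Z_N}(t_k^+) - r_{Z_N}(t_k^-) = \bigl\lvert\omega_k \cdot \bigl(v_{j_k}(t_k^-) - v_{i_k}(t_k^-)\bigr)\bigr\rvert.
\end{equation*}
Telescoping $r_{Z_N}(t) - r_{Z_N}(0) = \sum_{k=1}^{K} \bigl(r_{Z_N}(t_k^+) - r_{Z_N}(t_k^-)\bigr)$ over all collision times yields \eqref{eq:s3-Illner}.

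The main obstacle is not the algebra but the a.e.\ qualifier: one must exclude the measure-zero set of $Z_N$ leading to triple (or higher) simultaneous collisions, grazing encounters with $\omega \cdot (v_j - v_i) = 0$, or accumulation of infinitely many collision times in the bounded window $[0,t]$. One also needs that the collision data $(t_k, i_k, j_k, \omega_k)$ depend measurably on $Z_N$ so that the right-hand side of \eqref{eq:s3-Illner} is a well-defined measurable function on $\mathcal{D}_N$ for a.e.\ $t$. Both points are standard in the billiards literature, following from the Liouville invariance of Lebesgue measure under $\psi_N^\tau$ together with a careful stratification of the singular set in $\partial \mathcal{D}_N$; rather than reprove them, I would simply cite \cite{Al1975,GSRT2014}, consistent with the paper's stated convention of deferring flow-regularity issues to the existence theory.
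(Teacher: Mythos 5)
Your proposal is correct and follows essentially the same route as the paper: establish that $r_{Z_N}$ is piecewise constant between collisions, invoke the jump computation $r_{Z_N}(t_k^+) - r_{Z_N}(t_k^-) = \left|\omega_k \cdot \left(v_{j_k}(t_k^-) - v_{i_k}(t_k^-)\right)\right|$ obtained from energy and momentum conservation together with the pre-collisional sign condition, and telescope over the finite collision set. Your explicit handling of the a.e.\ qualifier and the deferral of flow-regularity to \cite{Al1975,GSRT2014} is a slightly more careful rendering of the paper's informal ``adding up all collisions along the trajectory,'' but it is the same argument.
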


We will require an auxiliary lemma due to Illner and Pulvirenti
which follows easily from Proposition \ref{prop:s3-Illner}. \cite{IP1986,IP1989}
In order to state the lemma, we introduce a new function on $\mathcal{D}_N$,
\begin{equation}
\label{eq:s3-I-N}
I_N (Z_N) = \sum_{i=1}^N |x_i|^2
\end{equation}
The proof is a computation, which we include for completeness.
\begin{lemma}
\label{lemma:s3-IP-1}
For $\textnormal{ a.e. } Z_N =(X_N,V_N)
\in\mathcal{D}_N$ and all $t\in\mathbb{R}$, we have
\begin{equation}
\label{eq:s3-IP-1}
I_N \left( \psi_N^t Z_N\right) \geq I_N \left(
(X_N + V_N t,V_N)\right) 
\end{equation}
\end{lemma}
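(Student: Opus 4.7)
The plan is to express the difference $\phi(t) := I_N(\psi_N^t Z_N) - I_N((X_N + V_N t, V_N))$ as a time integral of a quantity controlled by Illner's identity. Since $\phi(0) = 0$, it will then suffice to show that $\phi'(t) \geq 0$ for $t > 0$ and $\phi'(t) \leq 0$ for $t < 0$.

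To compute $\phi'$, I use that between collisions $\dot{x}_i = v_i$ and $\dot{v}_i = 0$, while across a collision the positions are continuous and the total kinetic energy $E := \sum_i |v_i|^2$ is conserved. Writing $\frac{d}{dt}|x_i|^2 = 2 x_i \cdot v_i$, summing in $i$, and using the definition of $r_N$ in (\ref{eq:s2-r-N-def}) yields
\begin{equation*}
\frac{d}{dt} I_N(\psi_N^t Z_N) = 2 \sum_i x_i(t) \cdot v_i(t) = 2 r_{Z_N}(t) + 2 t E
\end{equation*}
at all non-collision times, while the analogous computation for the straight-line flow gives
\begin{equation*}
\frac{d}{dt} I_N((X_N + V_N t, V_N)) = 2 r_{Z_N}(0) + 2 t E.
\end{equation*}
The $2tE$ terms cancel, so $\phi'(t) = 2(r_{Z_N}(t) - r_{Z_N}(0))$ almost everywhere. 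Since $\phi$ is continuous with $\phi(0) = 0$, integrating produces $\phi(t) = 2 \int_0^t (r_{Z_N}(s) - r_{Z_N}(0))\,ds$.

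For $t \geq 0$, Proposition \ref{prop:s3-Illner} makes the integrand non-negative, so $\phi(t) \geq 0$ immediately. For $t < 0$ the same pointwise calculation used to prove Illner's identity---each collision produces a non-negative jump in $r_{Z_N}$ \emph{as time increases}, regardless of any chosen time origin---shows $r_{Z_N}(s) \leq r_{Z_N}(0)$ for all $s \leq 0$, and rewriting $\int_0^t = -\int_t^0$ then yields $\phi(t) \geq 0$ once more. The only mild obstacle is bookkeeping: one must check that $t \mapsto I_N(\psi_N^t Z_N)$ is absolutely continuous (immediate, since the trajectory has at most countably many collisions for a.e. $Z_N$) and must explicitly extend the monotonicity of $r_{Z_N}$ to negative times, both of which are direct consequences of the jump computation in Section \ref{sec:3}.
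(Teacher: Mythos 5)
Your proof is correct and follows essentially the same route as the paper: compute that $\frac{d}{dt}\bigl[I_N(\psi_N^t Z_N) - I_N((X_N + V_N t, V_N))\bigr] = 2(r_{Z_N}(t) - r_{Z_N}(0))$ between collisions, invoke continuity, and apply the monotonicity of $r_{Z_N}$ from Illner's identity. The only cosmetic difference is that the paper dispatches $t<0$ with the phrase ``by time-reversibility,'' whereas you argue directly that the collision jumps make $r_{Z_N}$ non-decreasing on all of $\mathbb{R}$, which is an equally valid (and slightly more explicit) way to close that case.
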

\begin{proof}
By time-reversibility we may assume $t\geq 0$. The function
$I_N (\psi_N^t Z_N)$ is globally continuous in $t$ for
$\textnormal{ a.e. }Z_N \in \mathcal{D}_N$. With this in mind, it
suffices to point out that the desired inequality is true for $t=0$, and
between collisions (using energy conservation) we have
\begin{equation*}
\frac{d}{dt} \left\{ I_N \left( \psi_N^t Z_N \right) -
I_N \left( (X_N + V_N t,V_N)\right) \right\} =
2 \left\{ r_{Z_N}(t) - r_{Z_N} (0)\right\}
\end{equation*}
We conclude by Proposition \ref{prop:s3-Illner}.
\end{proof}

\begin{remark}
Lemma \ref{lemma:s3-IP-1} was the key estimate which Illner and Pulvirenti
\cite{IP1986,IP1989}
relied upon to rigorously derive Boltzmann's equation, globally in
time, for a rarefied gas in $\mathbb{R}^d$. The theorem of
Illner and Pulvirenti is analogous to ``small data'' results for
nonlinear PDE, and therefore does not resolve the problem of
deriving Boltzmann's equation near global Maxwellians.
Note however that Proposition \ref{prop:s3-Illner} is
\emph{more} general than Lemma \ref{lemma:s3-IP-1} and generally contains
more detailed information about the dynamics, including possible
cancellations. Another important point is that Illner and Pulvirenti
actually applied Lemma \ref{lemma:s3-IP-1} to isolated
\emph{clusters} of particles, controlling separately the interactions
between clusters. In the same way Proposition \ref{prop:s3-Illner}
is applicable to isolated clusters of particles, just as it is
applicable to the gas as a whole.
\end{remark}

The next lemma is technical, and again follows from 
Proposition \ref{prop:s3-Illner}.
\begin{lemma}
\label{lemma:s3-IP-2}
For $\textnormal{ a.e. }Z_N = (X_N,V_N) \in \mathcal{D}_N$,
all $t\geq 0$, and
all $\lambda > 0$, there holds
\begin{equation}
\label{eq:s3-IP-2}
\left| r_{Z_N} (t) \right| \leq \frac{1}{2} \lambda^{-1} \sum_{i=1}^N
\left( \lambda^2 |x_i|^2 + |v_i|^2 \right)
\end{equation}
\end{lemma}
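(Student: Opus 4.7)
The plan is to express $r_{Z_N}(t)$ as an inner product that is amenable to a Cauchy-Schwarz / AM-GM estimate, and then to invoke Lemma \ref{lemma:s3-IP-1} run backwards in time to control the resulting quadratic term. Unlike the one-sided Illner inequality, which only yields a lower bound $r_{Z_N}(t) \geq r_{Z_N}(0)$, this route produces a symmetric two-sided bound in a single shot.

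First I would write $\psi_N^t Z_N = (X_N(t), V_N(t))$ and exploit energy conservation $\sum_i |v_i(t)|^2 = \sum_i |v_i|^2$ to rearrange the defining formula for $r_N$:
\begin{equation*}
r_{Z_N}(t) = \sum_{i=1}^N \bigl( x_i(t) \cdot v_i(t) - |v_i(t)|^2 t \bigr) = \sum_{i=1}^N \bigl( x_i(t) - v_i(t) t \bigr) \cdot v_i(t).
\end{equation*}
The explicit time-dependence has been absorbed into a bilinear form whose factors are quantities we can control. Cauchy-Schwarz on each summand followed by AM-GM with weight $\lambda > 0$ gives
\begin{equation*}
|r_{Z_N}(t)| \leq \frac{1}{2} \lambda^{-1} \sum_{i=1}^N \bigl( \lambda^2 |x_i(t) - v_i(t) t|^2 + |v_i(t)|^2 \bigr).
\end{equation*}

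The second term collapses to $\sum_i |v_i|^2$ by energy conservation, so the main obstacle—and the only step where the dynamics enter nontrivially—is bounding $\sum_i |x_i(t) - v_i(t) t|^2$ by $\sum_i |x_i|^2 = I_N(Z_N)$. This is exactly Lemma \ref{lemma:s3-IP-1} applied to the reversed trajectory: setting $\tilde Z_N := \psi_N^t Z_N$ and invoking the lemma at time $-t$ (valid since the lemma is stated for all $t \in \mathbb{R}$), one obtains
\begin{equation*}
I_N(Z_N) = I_N\bigl( \psi_N^{-t} \tilde Z_N \bigr) \geq I_N\bigl( (X_N(t) - V_N(t) t, V_N(t)) \bigr) = \sum_{i=1}^N |x_i(t) - v_i(t) t|^2.
\end{equation*}
Substituting this into the previous display yields the desired inequality. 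The only subtle point to verify is that the application of Lemma \ref{lemma:s3-IP-1} at the negative time $-t$ is legitimate for almost every $Z_N$, which follows from the measurability and essential invertibility of the hard sphere flow.
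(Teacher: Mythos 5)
Your proof is correct and follows essentially the same route as the paper: factor the defining expression as $\sum_i (x_i(t) - v_i(t)t)\cdot v_i(t)$, apply Cauchy--Schwarz/AM--GM with weight $\lambda$, use energy conservation on the velocity term, and invoke Lemma~\ref{lemma:s3-IP-1} at time $-t$ (equivalently, composing with $\psi_N^t$) to control the position term by $I_N(Z_N)$. The only cosmetic slip is your claim that energy conservation is what justifies the initial rearrangement into inner-product form --- that step is pure algebra, and energy conservation is used only later to replace $\sum_i |v_i(t)|^2$ by $\sum_i |v_i|^2$.
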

\begin{proof}
Recall that $r_{Z_N} (t) \equiv r_N \left( t,\psi_N^t Z_N\right)$.
On the other hand, by (\ref{eq:s2-r-N-def}),
\begin{equation*}
\begin{aligned}
\left| r_N (t,Z_N) \right| & \leq \sum_{i=1}^N \left|
x_i \cdot v_i - |v_i|^2 t\right| \\
& = \sum_{i=1}^N \left| \left( x_i - v_i t\right)\cdot v_i \right| \\
& \leq \frac{1}{2} \sum_{i=1}^N \left( \lambda |x_i-v_i t|^2 + 
\lambda^{-1} |v_i|^2\right) \\
& = \frac{1}{2} \lambda I_N \left( (X_N-V_N t,V_N)\right) +
\frac{1}{2} \lambda^{-1} \sum_{i=1}^N |v_i|^2
\end{aligned}
\end{equation*}
We can bound the first term on the last line using 
Lemma \ref{lemma:s3-IP-1}. Hence,
\begin{equation*}
\left| r_N (t,Z_N)\right| \leq
\frac{1}{2} \lambda I_N \left( \psi_N^{-t} Z_N\right) +
\frac{1}{2} \lambda^{-1} \sum_{i=1}^N |v_i|^2
\end{equation*}
Replace $Z_N$ by $\psi_N^t Z_N$ on both sides and use the conservation
of energy to conclude.
\end{proof}

Combining Proposition \ref{prop:s3-Illner} and Lemma \ref{lemma:s3-IP-2},
we obtain:
\begin{corollary}
\label{cor:s3-IP-3}
For $\textnormal{ a.e. } Z_N = (X_N,V_N) \in\mathcal{D}_N$ and all
$\lambda>0$, we have
\begin{equation}
\label{eq:s3-IP-3}
\sum_k \left| \omega_k \cdot \left( v_{j_k} (t_k^-) -
v_{i_k} (t_k^-) \right) \right| \leq 2 \lambda^{-1} \sum_{i=1}^N
\left( \lambda^2 |x_i|^2 + |v_i|^2 \right)
\end{equation}
where the sum $\sum_k$ is over all collisions along the trajectory
$\left\{ \psi_N^t Z_N \right\}_{t\in\mathbb{R}}$.
\end{corollary}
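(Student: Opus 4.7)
The plan is to combine Proposition \ref{prop:s3-Illner} (Illner's identity) with the uniform bound on $|r_{Z_N}(t)|$ provided by Lemma \ref{lemma:s3-IP-2}; essentially the argument amounts to a triangle inequality together with an argument to extend the collision sum from $[0,\infty)$ to all of $\mathbb{R}$.

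First I would fix $t \geq 0$ and apply Illner's identity, which expresses the sum of $\left|\omega_k \cdot \left(v_{j_k}(t_k^-) - v_{i_k}(t_k^-)\right)\right|$ over collisions in $[0,t]$ as $r_{Z_N}(t) - r_{Z_N}(0)$. The triangle inequality bounds this by $|r_{Z_N}(t)| + |r_{Z_N}(0)|$, and applying Lemma \ref{lemma:s3-IP-2} to each term produces an upper bound of $\lambda^{-1}\sum_i\left(\lambda^2|x_i|^2 + |v_i|^2\right)$ that is uniform in $t$. Because every summand on the right-hand side of (\ref{eq:s3-Illner}) is non-negative, the partial collision sums are monotone nondecreasing in $t$, so sending $t \to \infty$ (monotone convergence) controls the total contribution from collisions with $t_k \geq 0$ by the same quantity.

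To handle the collisions with $t_k < 0$, I would invoke time-reversibility of the hard sphere flow. Running the trajectory of $Z_N = (X_N,V_N)$ backward in time is equivalent to running the trajectory of $\tilde{Z}_N = (X_N,-V_N)$ forward; at each such collision the relative velocity merely swaps pre- and post-collisional roles, so the magnitude $\left|\omega_k \cdot (v_{j_k} - v_{i_k})\right|$ is unaffected. Since $\tilde{Z}_N$ has the same value of $\sum_i(\lambda^2|x_i|^2 + |v_i|^2)$ as $Z_N$, applying the previous step to $\tilde{Z}_N$ yields an identical bound on the negative-time collision sum, and adding the two half-line contributions produces the factor of $2$ in (\ref{eq:s3-IP-3}).

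The main nuisance is not analytical but measure-theoretic bookkeeping: one must check that the full-measure set on which Proposition \ref{prop:s3-Illner} and Lemma \ref{lemma:s3-IP-2} both apply is preserved under the time-reversal map $(X_N,V_N)\mapsto(X_N,-V_N)$, so that all three facts hold simultaneously for a.e. $Z_N \in \mathcal{D}_N$. Once that is secured, no further computation is required.
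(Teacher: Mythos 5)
Your proposal is correct and fills in exactly the argument the paper compresses into the phrase ``Combining Proposition~\ref{prop:s3-Illner} and Lemma~\ref{lemma:s3-IP-2}, we obtain.'' The splitting into positive- and negative-time collisions, handled via the velocity-reversal map $(X_N,V_N)\mapsto(X_N,-V_N)$, is a clean way to stay within the literal hypotheses of the two cited results (both stated for $t\ge 0$), and it reproduces the stated constant $2\lambda^{-1}$ exactly: each half-line contributes $|r_{Z_N}(t)|+|r_{Z_N}(0)|\le \lambda^{-1}\sum_i(\lambda^2|x_i|^2+|v_i|^2)$ by Lemma~\ref{lemma:s3-IP-2}, and the two halves add. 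One small remark: since the jumps in $r_{Z_N}$ are all nonnegative, $r_{Z_N}$ is globally nondecreasing on $\mathbb{R}$, so the total collision sum is $\lim_{t\to\infty}r_{Z_N}(t)-\lim_{t\to-\infty}r_{Z_N}(t)\le \lambda^{-1}\sum_i(\lambda^2|x_i|^2+|v_i|^2)$; your two-sided split double-counts the $|r_{Z_N}(0)|$ term, which is why you land on $2\lambda^{-1}$ rather than the sharper $\lambda^{-1}$. Either way the stated inequality holds, and your measure-theoretic caveat (that the a.e.\ sets from the proposition and lemma are preserved under velocity reversal, which preserves Lebesgue measure on $\mathcal{D}_N$) is the right thing to check.
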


\begin{remark}
It is interesting to compare Corollary \ref{cor:s3-IP-3} against the
case where we count \emph{all} collisions equally, without the
weighting factor $\left| \omega \cdot \left(
v_j - v_i \right) \right|$. In that case the best one can do with current
technology is bound the number of collisions by super-exponential
functions of the number of particles,
e.g. growing faster than $N^N$; we refer to
\cite{BFK1998} for estimates of this type. Indeed, the authors of
\cite{BFK1998} rightly note that much better estimates can be proven
if not all collisions are counted. Note that unweighted collision estimates
are of direct interest for dynamical systems theory,
whereas kinetic theory is
more interested in finding good \emph{function spaces} whose 
associated norms may well contain
weights.
\end{remark}

\begin{remark}
Clearly, Corollary \ref{cor:s3-IP-3} represents the ``worst case''
behavior for a system of hard spheres. If the initial conditions
$Z_N$ are chosen ``randomly'' (with suitable scalings of $x,v$;
cf. low-density limit, \cite{CIP1994})
 then the left-hand side should typically
be \emph{much smaller} than the right-hand side, at least when collisions
are counted on finite time intervals. This statement
can be formalized and proved (in an average sense for suitable
$f_0$) on a small time
interval in the Boltzmann-Grad scaling, using the bounds from
the proof of Lanford's theorem. An interesting open question, which
should be addressed,
 is whether improvements can be obtained (on average)
for Corollary \ref{cor:s3-IP-3}, locally in time and
 away from local equilibria,
while assuming less than what is
required to prove Lanford's theorem.  Such a result by itself cannot
be expected to allow improvement of the time of convergence
 in Lanford's theorem, but may provide
relevant insights in that direction.
\end{remark}

\section{An averaging trick}
\label{sec:4}

The previous section was primarily concerned with weighted sums over
collisions which occur along a single trajectory
$\left\{ \psi_N^t Z_N \right\}_t$. However, as has been explained
in Section \ref{sec:2}, we are really interested in ensemble averages
over many trajectories. This is due to the physical fact that we cannot
say with any precision what the initial state $Z_N$ ``really'' is.
We will ``prove'' a spacetime estimate by averaging both sides
of (\ref{eq:s3-IP-3}) with respect to the \emph{same} measure
$f_N (0,Z_N) dZ_N$ and applying a change of variables on the left-hand
side. The change of variables as presented here is not entirely rigorous,
though we are confident that this approach can be converted into a rigorous
proof. An alternative, completely rigorous, proof of the virial-type estimate
(Proposition \ref{prop:s4-interaction})
has already been given. \cite{De2016T} 

\begin{remark}
We emphasize that the results
of this section are not new, nor are they especially novel except perhaps
in the style of presentation; indeed,
 estimates of the type shown here go back many
decades. In particular, in this section
 we will prove a virial-type
estimate for the second marginal which holds under finiteness of
second moments, but this estimate is \emph{not} uniform in the Boltzmann-Grad
scaling. The novelty of our contribution is precisely the fact that
virial estimates \emph{can} sometimes provide nontrivial information in
the Boltzmann-Grad scaling (not easily accessible by other means), 
but that discussion is deferred 
to Section \ref{sec:L}.
\end{remark}

We will find it helpful to define an auxiliary function,
\begin{equation}
W_N^{(i,j)} (Z_N) = \left|
\left( x_j - x_i\right)\cdot \left( v_j - v_i\right)\right|
\end{equation}
Observe that $W_N^{(i,j)} (Z_N) = \left|
\omega \cdot \left(v_j - v_i\right) \right| $ if
$Z_N \in \partial \mathcal{D}_N$ represents a collision between 
particles $i$ and $j$ with $x_j = x_i + \omega$.
For any $Z_N \in \mathcal{D}_N$ let $\tilde{i}_N (Z_N)$,
$\tilde{j}_N (Z_N)$ be chosen such that $0 < \left|x_{\tilde{i}_N}
-x_{\tilde{j}_N}\right|\leq \left|x_i-x_j\right|$ for all
$i\neq j$. This uniquely defines $\tilde{i}_N$,$\tilde{j}_N$ (up to
switching the two indices) for
$\textnormal{ a.e. }Z_N \in \mathcal{D}_N$ and also for
$\textnormal{ a.e. }Z_N \in \partial \mathcal{D}_N$. Let us finally
define
\begin{equation}
W_N (Z_N) = W_N^{\left(\tilde{i}_N (Z_N),\tilde{j}_N (Z_N)\right)}(Z_N)
\end{equation}
so that $W_N (Z_N)$ is always equal to the correct collision parameter
$\left| \omega \cdot (v_j-v_i)\right|$ (when binary collisions
are well-defined) globally along 
$\partial \mathcal{D}_N$.

The ``proof'' of the spacetime estimate is based on the following
observation: the collision sum $\sum_k$ on the left-hand side of 
(\ref{eq:s3-IP-3}) may be re-cast as an integral in time:
\begin{equation*}
\sum_k W_N \left( \psi_N^{t_k} Z_N\right) \equiv
\int_{\mathbb{R}} \delta_{\psi_N^t Z_N \in \partial \mathcal{D}_N}
W_N \left( \psi_N^t Z_N \right) dt
\end{equation*}
Average both sides with respect to $f_N (0,Z_N) dZ_N$.
\begin{equation*}
\begin{aligned}
& \int_{\mathcal{D}_N}
\left\{ \sum_k W_N \left( \psi_N^{t_k} Z_N\right) \right\} 
f_N (0,Z_N) dZ_N = \\
& \qquad \qquad = \int_{\mathcal{D}_N} 
\int_{\mathbb{R}} \delta_{\psi_N^t Z_N \in \partial \mathcal{D}_N}
W_N \left( \psi_N^t Z_N \right) f_N (0,Z_N) dt dZ_N
\end{aligned}
\end{equation*}
The double integral on the right-hand side reduces (by Fubini) to
an integral of ``something'' over $\partial \mathcal{D}_N$, due to
the delta-function and the identity
$f_N (t,Z_N) = f_N (0,\psi_N^{-t}Z_N)$. Unfortunately, making the
change of variables precise requires a technical application of the
divergence theorem and careful manipulation of delta functions.
(The proof of \cite{De2016T} avoids any mention of delta functions.)
Here we record the result of correct manipulations:
\begin{equation}
\label{eq:s4-identity}
\begin{aligned}
& \int_{\mathcal{D}_N}
\left\{ \sum_k W_N \left( \psi_N^{t_k} Z_N\right) \right\} 
f_N (0,Z_N) dZ_N = \\
& \qquad \qquad = 
\int_{\mathbb{R}} \int_{\partial \mathcal{D}_N} 
\left[W_N \left(Z_N \right)\right]^2 f_N (t,Z_N) d\sigma_N dV_N dt
\end{aligned}
\end{equation}
where $d\sigma_N dV_N$ represents the surface measure on
$\partial \mathcal{D}_N$.

To conclude, we bound the left hand side of (\ref{eq:s4-identity})
using Corollary \ref{cor:s3-IP-3}, then reduce both sides using
the symmetry of $f_N (t)$ and the definition of the marginals
of $f_N (t)$. We have also simplified the estimate by optimal
choice of the parameter $\lambda > 0$.

\begin{proposition}
\label{prop:s4-interaction}
For each $N\in\mathbb{N}$, let $f_N (0)$ be an initial probability
density on $\mathcal{D}_N$, which we assume to be symmetric under particle
interchange, and let $f_N (t,Z_N) = f_N (0,\psi_N^{-t} Z_N)$. Let
$f_N^{(s)} (t)$,  $1\leq s \leq N$, denote the $s$-marginal of $f_N (t)$.
Further assume that $f_N (0)$ is smooth and compactly supported
in the interior of $\mathcal{D}_N$. Then 
for all $2 \leq s \leq N$  there holds
\begin{equation}
\label{eq:s4-interaction-1}
\begin{aligned}
 & \sum_{1\leq i < j \leq s} \int_{-\infty}^\infty
\int_{\mathbb{R}^{ds} \times \mathbb{R}^{d(s-1)} \times \mathbb{S}^{d-1}}
\left| \omega \cdot \left( v_j-v_i \right) \right|^2 \times \\
& \qquad \times f_N^{(s)} (t,\dots,x_i,v_i,\dots,
x_i+ \omega,v_j,\dots)
  d\omega dX_s^{(j)} dV_s dt \leq \\
&  \leq  C_d
\frac{ s (s-1)}{N}
\left( \int |x|^2 f_N^{(1)}(0,x,v)dxdv\right)^{\frac{1}{2}}
\left( \int |v|^2 f_N^{(1)}(0,x,v)dxdv\right)^{\frac{1}{2}}
\end{aligned}
\end{equation}
where $dX_s^{(j)} = dx_1 \dots dx_{j-1} dx_{j+1} \dots dx_s$ and 
$C_d$ is a constant depending only on the dimension $d$.
\end{proposition}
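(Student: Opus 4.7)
The plan is to combine the identity (\ref{eq:s4-identity}) with Corollary \ref{cor:s3-IP-3}, and then use the permutation symmetry of $f_N$ to pass from quantities on the full $N$-particle phase space to the marginal expression appearing in the proposition.

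To control the left-hand side of (\ref{eq:s4-identity}), I would bound the integrand $\sum_k W_N(\psi_N^{t_k} Z_N)$ pointwise using Corollary \ref{cor:s3-IP-3}, integrate against $f_N(0,Z_N)\,dZ_N$, and exploit symmetry so that $\sum_i |x_i|^2$ and $\sum_i |v_i|^2$ collapse to $N$ times the corresponding one-body integral against $f_N^{(1)}(0,x,v)$. Optimizing in $\lambda>0$ then gives
\begin{equation*}
\int_{\mathcal{D}_N} \sum_k W_N\!\left(\psi_N^{t_k}Z_N\right) f_N(0,Z_N)\, dZ_N \;\leq\; 4N \left(\int |x|^2 f_N^{(1)}(0,x,v)\, dx\, dv\right)^{1/2}\left(\int |v|^2 f_N^{(1)}(0,x,v)\, dx\, dv\right)^{1/2}.
\end{equation*}

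For the right-hand side of (\ref{eq:s4-identity}), I would decompose $\partial \mathcal{D}_N$ into its $\binom{N}{2}$ pairwise surface pieces $\{|x_i-x_j|=1\}$ (higher-order coincidences being Lebesgue negligible). Parametrizing each piece by $\omega\in \mathbb{S}^{d-1}$ via $x_j=x_i+\omega$, the weight $W_N$ equals $|\omega\cdot(v_j-v_i)|$, and the remaining separation constraints are automatic because $\textnormal{supp}\, f_N(t)\subset \overline{\mathcal{D}}_N$. By symmetry of $f_N(t)$ every piece contributes equally, and integrating out $z_3,\dots,z_N$ produces $f_N^{(2)}$; the same symmetry, combined with the nesting relation $\int f_N^{(s)}\, dz_3\cdots dz_s = f_N^{(2)}$, also lets me rewrite the left-hand side of (\ref{eq:s4-interaction-1}) as exactly $\binom{s}{2}$ times the same $(1,2)$ integral. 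Assembling everything produces a factor $\binom{s}{2}/\binom{N}{2}$ times the bound from the previous step, and the arithmetic identity $\binom{s}{2}N/\binom{N}{2}=s(s-1)/(N-1)\leq 2s(s-1)/N$ (valid for $N\geq 2$) yields the claimed inequality with $C_d=8$.

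The main obstacle is not the steps above, which are essentially symmetry and bookkeeping, but rather the rigorous derivation of identity (\ref{eq:s4-identity}). Interpreting the indicator $\delta_{\psi_N^t Z_N\in\partial \mathcal{D}_N}$ along hypersurfaces where $f_N(t)$ need not be smooth, and converting the resulting volume integral into a surface integral, requires a careful divergence-theorem argument together with a priori existence of the relevant traces of $f_N^{(s)}(t)$; the excerpt sidesteps this entirely by referring to \cite{De2016}.
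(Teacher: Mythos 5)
Your proposal follows essentially the same route as the paper's own sketch: bound the left side of \eqref{eq:s4-identity} by averaging Corollary~\ref{cor:s3-IP-3} against $f_N(0)\,dZ_N$, use symmetry to collapse $\sum_i|x_i|^2$ and $\sum_i|v_i|^2$ to $N$ times one-body integrals, optimize in $\lambda$, decompose $\partial\mathcal{D}_N$ into its $\binom{N}{2}$ pairwise contact pieces, and use permutation symmetry together with nesting of marginals to match the right side of \eqref{eq:s4-identity} against the left side of \eqref{eq:s4-interaction-1}, yielding $C_d$ of order a universal constant (times a Jacobian from parametrizing the contact hypersurface by $\omega$). You also correctly locate the genuine difficulty exactly where the paper itself places it, namely the rigorous justification of the identity \eqref{eq:s4-identity} and of the traces involved, for which the paper defers to \cite{De2016}.
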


\begin{remark}
It is possible to compare
Proposition \ref{prop:s4-interaction} with
Lemma 1 of \cite{IP1989}; several other related formulas may also be
found in \cite{I1989}.
\end{remark}

\begin{remark}
Note that, up to constants,
 Proposition \ref{prop:s4-interaction} is formally equivalent
to Corollary \ref{cor:s3-IP-3} because we can always fix a point
$Z_N^0 \in \mathcal{D}_N$ (such that the trajectory
$\left\{ \psi_N^t Z_N^0 \right\}_{t}$ is globally defined)
 and let
\begin{equation}
f_N (0,Z_N) = \frac{1}{N!} \sum_{\sigma \in \mathcal{S}_N} 
\delta_{Z_N = \sigma Z_N^0}
\end{equation}
Therefore, just as Corollary \ref{cor:s3-IP-3} is suboptimal for
``many'' choices of initial condition $Z_N$ (under suitable
scalings),
Proposition \ref{prop:s4-interaction} is suboptimal for most
physically relevant densities $f_N (0)$. Again this is not a
theorem but an empirical observation, rooted in the apparent success
of kinetic theory in describing physical reality.
\end{remark}

\section{Factorized Data}
\label{sec:fact}

In this section, and for the remainder of the work, we will
assume that spheres are re-scaled to have diameter
$\varepsilon > 0$ instead of diameter $1$. Thus the proper condition
defining $\mathcal{D}_N$ is
$|x_i - x_j| > \varepsilon$ instead of
$|x_i - x_j| > 1$.
\begin{equation}
\mathcal{D}_N = 
\left\{ Z_N = (X_N,V_N)\in \mathbb{R}^{dN} \times \mathbb{R}^{dN}
\left| \forall 1\leq i < j \leq N,\; |x_i - x_j|
> \varepsilon \right. \right\}
\end{equation}
 Moreover, we assume
the Boltzmann-Grad scaling $N \varepsilon^{d-1} = 1$; 
physically, this means that the mean free path for a typical particle
is of order one. Note that the total volume occupied by all
$N$ particles is of order $\varepsilon$; hence, the Boltzmann-Grad
limit describes a state of low density.

Suppose $f_0 (x,v) $ is a measurable function on
$\mathbb{R}^d \times \mathbb{R}^d$ with 
\begin{equation}
0 \leq f_0 (x,v) \in 
\left( L^1_{x,v} \cap 
L^\infty_x L^1_v \right) \left( \mathbb{R}^d \times
\mathbb{R}^d \right) 
\end{equation}
Furthermore let us suppose, for convenience, that
$f_0$ is a normalized probability distribution:
\begin{equation}
\int_{\mathbb{R}^{2d}} f_0 (x,v) dx dv = 1
\end{equation}
Then it is natural to define ``factorized'' states on
$\mathcal{D}_N$ in the following way:
\begin{equation}
f_N (0,Z_N) =
\mathcal{Z}_N^{-1} f_0^{\otimes N} (Z_N)
\mathbf{1}_{Z_N \in \mathcal{D}_N}
\end{equation}
Here $\mathcal{Z}_N$ is the partition function,
\begin{equation}
\mathcal{Z}_N = \int_{\mathbb{R}^{2dN}}
f_0^{\otimes N} (Z_N) \mathbf{1}_{Z_N \in \mathcal{D}_N}
dZ_N
\end{equation}
We will use the imprecise shorthand
$f_N (0) \sim f_0^{\otimes N}$ for such ``factorized''
initial data.
Then it is possible to prove the following \emph{pointwise}
estimate for the first marginal at $t=0$, valid for almost
every $(x,v) \in \mathbb{R}^{2d}$ under the Boltzmann-Grad
scaling $N \varepsilon^{d-1} = 1$, for all small
enough $\varepsilon > 0$ depending only on $f_0$ and $d$:
\begin{equation}
\left| f_N^{(1)} (0,x,v) - f_0 (x,v) \right| \leq
C_d \left\Vert f_0 \right\Vert_{L^\infty_x L^1_v} 
f_0 (x,v) \varepsilon
\end{equation}
Similar pointwise convergence estimates
(also of order $\varepsilon$) are available for higher order
marginals $f_N^{(s)} (0)$ as well, as long as $s$ is fixed
as $N\rightarrow \infty$. We refer to
\cite{GSRT2014} or \cite{De2016T} for detailed proofs.

The $\mathcal{O}(\varepsilon)$ convergence rate for the first marginal
at $t=0$ arises from careful partition function estimates, and 
is intuitively due to the fact that the total volume occupied by
all particles is roughly $N \varepsilon^d$, that is,
$\mathcal{O}(\varepsilon)$ in the Boltzmann-Grad scaling
$N\varepsilon^{d-1} = 1$. It is not hard to derive 
\emph{worse} convergence rates under weaker regularity assumptions,
e.g. $f_0 \in L^p_x L^1_v$ with $d<p<\infty$.
However, as far as we are
aware, one does not obtain an error which is
\emph{smaller} than $\mathcal{O}(\varepsilon)$ even if
$f_0$ is in the Schwartz class, or jointly Gaussian in $x$ and $v$. 

Let us now assume that $f_0$ is in the Schwartz class and let
$f(t)$ be the solution of Boltzmann's equation (on a small time
interval) with hard sphere interaction and $f(0) = f_0$:
\begin{equation}
\left( \partial_t + v \cdot \nabla_x \right)
f (t,x,v) = Q^+ (f,f) (t,x,v) - Q^- (f,f) (t,x,v)
\end{equation}
\begin{equation}
Q^+ (f,f) 
= \int_{\mathbb{R}^d \times \mathbb{S}^{d-1}}
d\omega dv_2 \left| \omega \cdot (v - v_2) \right|
f (t,x,v^*) f(t,x,v_2^*)
\end{equation}
\begin{equation}
Q^- (f,f) 
= \int_{\mathbb{R}^d \times \mathbb{S}^{d-1}}
d\omega dv_2 \left| \omega \cdot (v - v_2) \right|
f (t,x,v) f(t,x,v_2)
\end{equation}
Here we define the collisional change of variables,
for a unit vector
 $\omega \in \mathbb{S}^{d-1}$ and any
$v,v_2 \in \mathbb{R}^d$,
\begin{equation}
\begin{aligned}
v^* & = v + \omega \omega \cdot ( v_2 - v) \\
v_2^* & = v_2 - \omega \omega \cdot (v_2 - v)
\end{aligned}
\end{equation}
In the mid 1970s, Oscar Lanford showed that if $f_0 (x,v)$ is nice
enough (say smooth with compact support on $\mathbb{R}^{2d}$) then
the function $f_N (t,Z_N) = f_N (0,\psi_N^{-t} Z_N)$ with
initial data $f_N (0) \sim f_0^{\otimes N}$ 
has first marginal converging to the solution of
Boltzmann's equation on a small time interval in the
Boltzmann-Grad scaling $N\varepsilon^{d-1} = 1$.
\cite{L1975}
Namely, for some small $T>0$ depending on $f_0$,
any $t \in [0,T]$,
and almost every $x,v \in \mathbb{R}^d$,
\begin{equation}
\lim_{N \rightarrow \infty}
f_N^{(1)} (t,x,v) =
f(t,x,v)
\end{equation}
Moreover it follows from Lanford's proof that the higher order
marginals, say $f_N^{(s)} (t)$, converge to the tensor
products $f(t)^{\otimes s}$. Lanford's theorem was the first rigorous
justification of kinetic theory from deterministic
Newtonian mechanics, and his proof remains the basis of most of the
more recent developments in first principles derivations of
collisional kinetic equations.

\begin{remark}
We must point out that a completely different point
of view, pioneered by Kac and McKean, is to start with a
\emph{stochastic} model in which the microscopic
 position coordinates are
essentially hidden variables. In these models, the impact parameter
is automatically random even in the $N$ particle system, and
much more detailed results are available. Most notably, unlike the
Lanford theorem, the convergence to Boltzmann
 is often proven globally in
time. We refer to
\cite{MM2013} and references therein for more details.
\end{remark}

Within the past few years, a number of authors 
have worked to make Lanford's theorem into a more
quantitative result. Arguably the most notable contribution along
these lines is \cite{GSRT2014}, in which convergence rates
of order $\mathcal{O} \left(\varepsilon^{\frac{d-1}{d+1}-} \right)$ were
obtained. (The convergence can be proven in $L^\infty_{x,v}$
for the first
marginal, but not for higher marginals, due to issues of
irreversibility which we do not discuss here.)
It is not established in \cite{GSRT2014} whether the convergence rate
obtained therein is optimal (nor does there seem to be a particularly
compelling reason that it should be optimal). Intuitively it should
not be possible to obtain an error that is much smaller than
$\mathcal{O}(\varepsilon)$, since such small errors cannot even be
proven at $t=0$. We are not aware of any proof in the literature that the
convergence rate in Lanford's theorem can be
$\mathcal{O}\left(\varepsilon^{1-}\right)$; however, what we will
prove here is that the $L^\infty_{x,v}$ error (of the first marginal)
certainly cannot be
much \emph{smaller} than $\mathcal{O}\left(\varepsilon\right)$.
The proof is based on a virial-type inequality
similar to (\ref{eq:s4-interaction-1}).

\section{On Convergence Rates in Lanford's Theorem}
\label{sec:L}

In this section we will establish a lower bound on the convergence
rate in Lanford's theorem; throughout our discussion, 
we will actually assume Maxwellian tails
jointly in $x$ and $v$, as in \cite{IP1986,IP1989}. 
Before stating our main result, a few comments are in order.
The first is that conservation laws may automatically imply bounds
from below: after all, we certainly have
\begin{equation}
\int_{\mathbb{R}^{2d}} |v|^2 f_N^{(1)} (t,x,v) dx dv
= \int_{\mathbb{R}^{2d}} |v|^2 f_N^{(1)} (0,x,v) dx dv
\end{equation}
Therefore, in the event that $f_N (0)$ is chosen to satisfy
the following inequality:
\begin{equation}
\liminf_{N\rightarrow \infty}
\varepsilon^{-1} 
\left| \int_{\mathbb{R}^{2d}}
|v|^2 f_N^{(1)} (0,x,v) dx dv
- \int_{\mathbb{R}^{2d}} |v|^2 f_0 (x,v) dx dv
\right| > 0
\end{equation}
then $f_N^{(1)}(t)$ cannot be within
$o \left(\varepsilon \right)$ of $f(t)$, since  the kinetic
energy witnesses a slower convergence rate. 
On the other hand, by the example below, 
 it is not hard to construct factorized densities
$f_N (0)$ which satisfy both
\begin{equation}
\label{eq:L-aaaa}
\int_{\mathbb{R}^{2d}} |v|^2 f_N^{(1)} (0,x,v) dx dv
= \int_{\mathbb{R}^{2d}} |v|^2 f_0 (x,v) dx dv
\end{equation}
and
\begin{equation}
\label{eq:L-bbbb}
\int_{\mathbb{R}^{2d}} v f_N^{(1)} (0,x,v) dx dv
= \int_{\mathbb{R}^{2d}} v f_0 (x,v) dx dv
\end{equation}
Therefore the conserved moments do not \emph{always} place a lower
bound on the convergence rate in Lanford's theorem.

\begin{example}
\label{ex:ex1}
Fix a large number $K$.
Let $\mathcal{J}$ be a (possibly uncountable) index set and let $\mu$ be
a probability 
measure on on $\mathcal{J}$. For $\mu\textnormal{-a.e.}$
$\alpha \in \mathcal{J}$, measurably with respect to $\alpha$,
 we pick a non-negative measurable function
$\rho_\alpha (x)$ with $\int \rho_\alpha (x) dx = 1$
and $\left\Vert \rho_\alpha \right\Vert_\infty \leq K$, and a
non-negative measurable function $m_\alpha (v)$ with
$\int m_\alpha (v) dv = 1$, $\int v m_\alpha (v) dv = 0$ and
$\int |v|^2 m_\alpha (v) dv = 1$. Then if we write
\begin{equation}
f_0 (x,v) = \int_{\mathcal{J}} 
d\mu (\alpha) \rho_\alpha (x) m_\alpha (v)
\end{equation}
and let $f_N (0) \sim f_0^{\otimes N}$ as in Section \ref{sec:fact},
then we have both (\ref{eq:L-aaaa}) and (\ref{eq:L-bbbb}). 
\end{example}

The second observation is that (under natural decay assumptions)
 we can place lower bounds on
the $L^\infty$ error, up to powers of $\log \frac{1}{\varepsilon}$,
 by placing lower bounds on a weighted
$L^1$ error. To see why, observe that if both
$f_N^{(1)} (t)$ and $f(t)$ have Maxwellian tails jointly 
in $x$ and $v$ (as in the work of \cite{IP1986,IP1989}) then
there exists a number $k=k(t)>0$ (depending on $f$)
such that the following estimate holds
for all $C>0$:
\begin{equation}
\begin{aligned}
& \int_{\mathbb{R}^{2d}} \left( |x|^2 + |v|^2 \right)
\left| f_N^{(1)} (t) - f(t) \right| dx dv
\lesssim \\
& \qquad \qquad \qquad
\lesssim \left( C \log \frac{1}{\varepsilon} \right)^{d+1}
\left\| f_N^{(1)} (t,x,v) - f (t,x,v) \right\|_{L^\infty_{x,v}}
+ \varepsilon^{kC}
\end{aligned}
\end{equation}
Hence we will not actually mention the $L^\infty_{x,v}$
norm in stating our main result.

Finally, we remark that for any solution $f(t)$ of Boltzmann's
equation having enough regularity and decay in $x$ and $v$,
there holds
\begin{equation}
\label{eq:L-Bvir}
\frac{d}{dt}
\int_{\mathbb{R}^{2d}} \left( x\cdot v - |v|^2 t\right)
f (t,x,v) dx dv  = 0
\end{equation}
We will not actually use any regularity properties of
Boltzmann's equation aside from (\ref{eq:L-Bvir}). Therefore,
instead of trying to find optimal conditions which
guarantee (\ref{eq:L-Bvir}), we will simply include
(\ref{eq:L-Bvir}) as a hypothesis in the theorem.

\begin{theorem}
\label{thm:MainThm}
Let $f_N (t,Z_N)$ be a non-negative solution of the Liouville equation for
$N$ identical hard spheres of diameter $\varepsilon > 0$,
for each $N$ in the Boltzmann-Grad scaling $N \varepsilon^{d-1} = 1$.
We suppose that $f_N$ is symmetric under particle interchange,
 that $f_N$ is a probability density, and that the following
moment estimate holds:
\begin{equation}
\sup_N \frac{1}{N} \int_{\mathcal{D}_N}
\sum_{i=1}^N \left( |x_i|^2 + |v_i|^2 \right)
f_N (0,Z_N) dZ_N < \infty
\end{equation}
Additionally, assume that $f(t)$ is a solution of the Boltzmann
equation which satisfies
\begin{equation}
\label{eq:L-a}
\forall t\in [0,T],\quad
\int_{\mathbb{R}^{2d}}
\left( x\cdot v - |v|^2 t\right)
f (t,x,v) dx dv =
\int_{\mathbb{R}^{2d}} x\cdot v f(0,x,v) dx dv
\end{equation}
Assume, moreover, that $f(t)$ does not satisfy the free transport
equation on any open subinterval of $[0,T]$. Finally, assume that
\begin{equation}
\label{eq:L-cc}
\lim_{N\rightarrow \infty} f_N^{(1)} (t) = f(t)
\end{equation}
holds in the sense of distributions for each $t \in [0,T]$.
 Then for any $T_1,T_2$ with
$0 \leq T_1 < T_2 \leq T$ we have
\begin{equation}
\label{eq:L-conc}
\liminf_{N\rightarrow \infty} \varepsilon^{-1}
\sup_{t \in [T_1,T_2]}
\int_{\mathbb{R}^{2d}} \left( |x|^2 + |v|^2 \right)
\left| f_N^{(1)} (t) - f(t) \right| dx dv > 0
\end{equation} 
\end{theorem}

\begin{remark}
The requirement that $f$ is not a solution of free transport is a
technical condition; it excludes local Maxwellian functions,
e.g. $f (t) = c e^{-|x - v t|^2} e^{-|v|^2}$. It is expected that
optimal $\mathcal{O} (\varepsilon)$ convergence rates hold for 
such solutions, but this is not a part of the theorem.
\end{remark}

\begin{remark}
The conditions of Theorem \ref{thm:MainThm} are satisfied if
$f_0 : \mathbb{R}^d \times \mathbb{R}^d \rightarrow \mathbb{R}$
is a smooth, compactly supported,
 non-negative probability density function,
 with associated solution $f(t)$ of Boltzmann's
equation; and,
$f_N (0) \sim f_0^{\otimes N}$ as in Section \ref{sec:fact},
and $T$ is the small time appearing in the original theorem
of Lanford. \cite{L1975}
\end{remark}

\begin{remark}
The supremum over $[T_1,T_2]$ in (\ref{eq:L-conc}) can actually be
replaced by a supremum over the two-point set
$\left\{ T_1, T_2 \right\}$ (the proof is the same).
\end{remark}

\begin{proof}
We will assume that (\ref{eq:L-conc}) fails for some
$T_1,T_2$ in order to reach a contradiction.
We have the following virial identity
(see the proof of Proposition 1.3.5 in \cite{De2016T},
or average out (\ref{eq:s3-Illner}) in the manner
of Section \ref{sec:4}):
\begin{equation}
\begin{aligned}
& \int_{\mathbb{R}^{2d}}
\left( x \cdot v - |v|^2 T_2 \right)
f_N^{(1)} (T_2) dx dv - \int_{\mathbb{R}^{2d}}
\left( x\cdot v - |v|^2 T_1 \right) f_N^{(1)} (T_1) dx dv \\
& \qquad \qquad =
C_d \frac{N-1}{2} \varepsilon^d \int_{T_1}^{T_2} 
\int_{\mathbb{R}^d \times \mathbb{R}^d \times \mathbb{R}^d
\times \mathbb{S}^{d-1}}
\left| \omega \cdot (v_2 - v_1) \right|^2 \times \\
&\qquad \qquad \qquad \qquad \qquad \qquad
\times f_N^{(2)} (t,x_1,v_1,x_1 + \varepsilon \omega,v_2)
d\omega dx_1 dv_1 dv_2 dt
\end{aligned}
\end{equation}
Therefore, using the Boltzmann-Grad scaling
$N\varepsilon^{d-1} = 1$ and 
(\ref{eq:L-a}),  we obtain:
\begin{equation}
\begin{aligned}
& \int_{T_1}^{T_2} \int_{\mathbb{R}^d \times \mathbb{R}^d
\times \mathbb{R}^d \times \mathbb{S}^{d-1}}
\left| \omega \cdot (v_2 - v_1) \right|^2 
f_N^{(2)} (t,x_1,v_1,x_1 + \varepsilon \omega,v_2)
d\omega dx_1 dv_1 dv_2 dt \\
& \quad \qquad  \leq 
C_{d,T} 
\varepsilon^{-1} 
\sup_{t \in [T_1,T_2]}
\int_{\mathbb{R}^{2d}}
\left( |x|^2 + |v|^2 \right)
\left| f_N^{(1)} (t,x,v) - f (t,x,v) \right| dx dv
\end{aligned}
\end{equation}
Here we have used the triangle inequality and the
fact that (\ref{eq:L-a}) holds with $t=T_1$ and $t=T_2$.

Hence if (\ref{eq:L-conc}) is not true then there exists a subsequence
$N^\prime$ (depending on $T_1,T_2$) such that
\begin{equation}
\label{eq:L-b}
\lim_{N^\prime}
\int_{T_1}^{T_2}
\int \left| \omega \cdot (v_2 - v_1) \right|^2
f_{N^\prime}^{(2)} (t,x_1,v_1,x_1+\varepsilon \omega,v_2)
d\omega dx_1 dv_1 dv_2 dt = 0
\end{equation}
The remainder of the proof consists in showing that (\ref{eq:L-b})
combined with (\ref{eq:L-cc})
implies that $f(t)$ satisfies the free transport equation
on $[T_1,T_2]$.

Define the transport semigroup, which acts on a function
$g(x,v)$, by the formula
\begin{equation}
\left[ \mathcal{T} (\tau) g \right]
(x,v) = g (x-\tau v,v)
\end{equation}
Also define the (lowest order) BBGKY collision operator
\begin{equation}
C_2 f_N^{(2)} (t,x_1,v_1) =
\int_{\mathbb{R}^d \times \mathbb{S}^{d-1}}
\omega \cdot (v_2 - v_1) 
f_N^{(2)} (t,x_1,v_1,x_1 + \varepsilon \omega,v_2)
d\omega dv_2
\end{equation}
Then we have (see \cite{GSRT2014}, note that there is
a factor of $(N-1)\varepsilon^{d-1}$ which we ignore in view
of the Boltzmann-Grad scaling):
\begin{equation}
\left( \partial_t + v \cdot \nabla_x
\right) f_N^{(1)} (t)
= C_2 f_N^{(2)} (t)
\end{equation}
Therefore for $t \in [T_1,T_2]$ we have the Duhamel representation
\begin{equation}
f_N^{(1)} (t) =
\mathcal{T} (t-T_1) f_N^{(1)} (T_1)
+ \int_{T_1}^t \mathcal{T} (t-\tau)
C_2 f_N^{(2)} (\tau) d\tau
\end{equation}

Let us employ the usual $L^2$ inner product
\begin{equation}
\left< \varphi, f \right> =
\int_{\mathbb{R}^{2d}} \varphi (x,v) f (x,v) dx dv
\end{equation}
Then for any smooth compactly supported function $\varphi$
and any $t \in [T_1,T_2]$ we have
\begin{equation}
\begin{aligned}
& \left< \varphi,
f_N^{(1)} (t) - \mathcal{T} (t-T_1) f_N^{(1)} (T_1) \right>
 = \int_{T_1}^t \left<
\varphi, \mathcal{T} (t-\tau) C_2 f_N^{(2)} (\tau) \right> d\tau
\end{aligned}
\end{equation}
Now by (\ref{eq:L-cc}) and the assumption that $f (t)$ does 
not solve the free transport equation on $[T_1,T_2]$, in order
to reach a contradiction it suffices to show:
\begin{equation}
\lim_{N^\prime}
\int_{T_1}^t \left<
\varphi, \mathcal{T} (t-\tau) C_2 f_N^{(2)} (\tau) \right> d\tau
= 0
\end{equation}

Using duality we have
\begin{equation}
\begin{aligned}
\int_{T_1}^t \left< \varphi,
\mathcal{T} (t-\tau) C_2 f_N^{(2)} (\tau) \right> d\tau
& = \int_{T_1}^t \left< \mathcal{T} (-(t-\tau)) \varphi,
C_2 f_N^{(2)} (\tau) \right> d\tau \\
& = 
\int_{T_1}^t \left<
\Phi (\tau),C_2 f_N^{(2)} (\tau) \right> d\tau
\end{aligned}
\end{equation}
Here we have defined (considering $t$ fixed)
\begin{equation}
\Phi (\tau) = 
\mathcal{T} (-(t-\tau)) \varphi
\end{equation}
Using the definition of the BBGKY collision operator, we have
\begin{equation}
\label{eq:L-rrr}
\begin{aligned}
& \int_{T_1}^t \left<
\Phi (\tau),C_2 f_N^{(2)} (\tau) \right> d\tau = \\
& \int_{T_1}^t
\int d\tau d\omega dx_1 dv_1 dv_2 
\; \omega \cdot (v_2 - v_1)
\Phi (\tau,x_1,v_1)
f_N^{(2)} (\tau,x_1,v_1,x_1+\varepsilon \omega,v_2)
\end{aligned}
\end{equation}

We recall the collisional change of variables
\begin{equation}
\begin{aligned}
v_1^* & = v_1 + \omega \omega \cdot (v_2-v_1) \\
v_2^* & = v_2 - \omega \omega \cdot (v_2-v_1)
\end{aligned}
\end{equation}
as well as the boundary condition
\begin{equation}
f_N^{(2)} (\tau,x_1,v_1^*,x_1+\varepsilon \omega,
v_2^*) =
f_N^{(2)} (\tau,x_1,v_1,x_1+\varepsilon \omega,v_2)
\end{equation}
hence what we need to control is the \emph{difference}
\begin{equation}
\Phi (\tau,x_1,v_1) - \Phi (\tau,x_1,v_1^*)
\end{equation}
Of course for $T_1 \leq \tau \leq t$ we have
\begin{equation}
\begin{aligned}
\left|\Phi (\tau,x_1,v_1) - \Phi (\tau,x_1,v_1^*) \right|
& \leq \left\Vert \nabla_v \Phi (\tau) \right\Vert_{L^\infty_{x,v}}
\left|v_1 - v_1^* \right| \\
& =  \left\Vert \nabla_v \Phi (\tau) \right\Vert_{L^\infty_{x,v}}
\left| \omega \cdot (v_2 - v_1) \right| \\
& \leq C_T \left\Vert
\nabla_{x,v} \varphi \right\Vert_\infty
\left| \omega \cdot (v_2 - v_1) \right|
\end{aligned}
\end{equation}
(This trick is inspired by related work due to
Cercignani using the Bony functional. \cite{Ce2005,Ce1995,Ce1992})

Hence the right hand side of (\ref{eq:L-rrr}) is controlled
by the following quantity:
\begin{equation}
C_T
 \left\Vert \nabla_{x,v} \varphi \right\Vert_\infty
\int_{T_1}^t \int d\tau d\omega
dx_1 dv_1 dv_2 
\left| \omega \cdot (v_2 -v_1) \right|^2
f_N^{(2)} (\tau,x_1,v_1,x_1 + \varepsilon \omega,v_2)
\end{equation}
which is going to zero (along the subsequence
$N^\prime$) due to (\ref{eq:L-b}).
\end{proof}

\section{Acknowledgements}
\label{sec:acknowledgements}

This paper is largely based on work completed for the author's
dissertation at New York University. The partial manuscript was
completed under a postdoctoral fellowship at the University
of Texas at Austin, for which I am most appreciative.
 I would like to thank my
PhD advisor, Nader Masmoudi, as well as Pierre Germain,
for their advice and comments. I would also like to thank
Nata{\v s}a Pavlovi{\' c} for reading an early version of this
manuscript and providing insightful feedback.
Additionally, I wish to indicate my appreciation to the anonymous
referee(s) for helpful comments following careful reading of the present version.
Finally I would like to thank the organizers of this
special session of JMM 2017 for the invitation, without which
this manuscript would most likely never have reached
its current state of completion.

\bibliography{Virial}

\end{document}